\title{On the symmetry of the finitistic dimension}
\author{Henning Krause}
\address{Fakult\"at f\"ur Mathematik\\
Universit\"at Bielefeld\\ D-33501 Bielefeld\\ Germany}
\email{hkrause@math.uni-bielefeld.de}
\theoremstyle{plain}
\newtheorem{thm}{Theorem}
\newtheorem{prop}[thm]{Proposition}
\newtheorem{lem}[thm]{Lemma} 
\newtheorem{cor}[thm]{Corollary}
\theoremstyle{definition}
\theoremstyle{remark}
\newtheorem{rem}[thm]{Remark}
\numberwithin{equation}{thm}
\newcommand{\Coker}{\operatorname{Coker}}
\newcommand{\colim}{\operatorname*{colim}}
\newcommand{\End}{\operatorname{End}}
\newcommand{\findim}{\operatorname{fin{.}dim}}
\newcommand{\Findim}{\operatorname{Fin{.}dim}}
\newcommand{\Hom}{\operatorname{Hom}}
\newcommand{\Mod}{\operatorname{Mod}}
\newcommand{\pdim}{\operatorname{proj{.}dim}}
\newcommand{\proj}{\operatorname{proj}}
\newcommand{\rad}{\operatorname{rad}}
\newcommand{\soc}{\operatorname{soc}}
\renewcommand{\top}{\operatorname{top}}
\newcommand{\op}{\mathrm{op}}
\newcommand{\iso}{\xrightarrow{\raisebox{-.4ex}[0ex][0ex]{$\scriptstyle{\sim}$}}}
\newcommand{\longiso}{\xrightarrow{\ \raisebox{-.4ex}[0ex][0ex]{$\scriptstyle{\sim}$}\ }}
\newcommand{\lto}{\longrightarrow}
\newcommand{\smatrix}[1]{\left[\begin{smallmatrix}#1\end{smallmatrix}\right]}
\newcommand*{\intref}[2]{\def\tmp{#1}\ifx\tmp\empty\hyperref[#2]{\ref*{#2}}\else\hyperref[#2]{#1~\ref*{#2}}\fi}
\def\M{\mathcal M}
\def\P{\mathcal P}
\def\bbN{\mathbb N}
\def\a{\alpha}
\def\b{\beta}
\def\e{\varepsilon}
\def\p{\phi}
\def\La{\Lambda}
\def\Si{\Sigma}
\begin{document}

\keywords{Finitistic dimension}

\subjclass[2020]{16E10}

\begin{abstract}
  For any ring we propose the construction of a cover which
  increases the finitistic dimension on one side and decreases the
  finitistic dimension to zero on the opposite side.  This complements
  recent work of Cummings.
\end{abstract}

\date{11 November, 2022}
\dedicatory{Dedicated to the memory of Helmut Lenzing.}
\maketitle

The finitistic dimension is a homological invariant of a ring which is
conjectured to be finite when the ring is a finite dimensional algebra
over a field \cite{Ba1960}. In recent work \cite{Cu2022} Cummings
introduces for any finite dimensional algebra a related algebra; its
purpose is to increase the finitistic dimension on one side and to
decrease the finitistic dimension to zero on the opposite side. In
this note we propose the construction of such an \emph{asymmetric
  cover} for any ring and we establish the same properties. This specialises
to Cummings' construction for finite dimensional algebras over an
algebraically closed field and yields examples of rings such that the
finitistic dimension is infinite while the finitistic dimension of
the opposite ring is zero. We need to distinguish between the small
and the big finitistic dimension but our results cover both
cases.\smallskip

\begin{center}
  \textasteriskcentered \qquad \textasteriskcentered \qquad \textasteriskcentered
\end{center}\smallskip

Let $A$ be an associative ring. We consider the category of (right)
$A$-modules and identify left $A$-modules with modules over the
opposite ring $A^\op$.  For a module $M$ we write $\rad M$ for its
\emph{radical} and $\soc M$ for its \emph{socle}. We set
$\top M=M/\rad M$. The functor
\[(-)^*:=\Hom_A(-,A)\]  yields a duality between
right and left $A$-modules.
We consider the \emph{trivial extension}
\[T(A):=A\ltimes A^\natural\] which is given by the bimodule
$A^\natural:={_AA_A}$.
This ring is by definition the abelian group $T(A)=A\oplus A^\natural$ with
multiplication given by the formula
\[(x,y)\cdot (x',y')=(xx',xy'+yx').\]
Note that $A^\natural$ is a two-sided ideal with $T(A)/A^\natural\iso A$.

\begin{lem}\label{le:T(A)}
Let $A$ be a semisimple ring. Then \[\rad T(A)= A^\natural=
  \soc T(A)\qquad\text{and}\qquad \top
T(A)\cong \soc T(A).\]
\end{lem}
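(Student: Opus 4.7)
The plan is to verify the three required identifications in turn, using elementary properties of the trivial extension together with semisimplicity of $A$.

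First I would establish $\rad T(A)=A^\natural$. Note that $A^\natural$ is a two-sided ideal with $(A^\natural)^2=0$, since $(0,y)(0,y')=(0,0)$ by the multiplication rule; hence $A^\natural$ is nilpotent and therefore contained in $\rad T(A)$. Conversely, $T(A)/A^\natural\cong A$ is semisimple by hypothesis, so $\rad T(A)\subseteq A^\natural$.

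Next I would show $\soc T(A)=A^\natural$. For the inclusion $A^\natural\subseteq\soc T(A)$, observe that because $(A^\natural)^2=0$, the right $T(A)$-module $A^\natural$ descends to a module over $T(A)/A^\natural\cong A$, where it coincides with $A_A$; since $A$ is semisimple, $A^\natural$ is a semisimple $T(A)$-module and so lies in the socle. For the reverse inclusion, recall the standard fact that any simple (hence any semisimple) right module over an arbitrary ring $R$ is annihilated by $\rad R$: a simple module is $R/I$ for some maximal right ideal $I\supseteq\rad R$. Applying this with $R=T(A)$ and $\rad T(A)=A^\natural$, any $(x,y)\in\soc T(A)$ satisfies $(x,y)(0,y')=(0,xy')=0$ for every $y'\in A$, forcing $x=0$, i.e.\ $(x,y)\in A^\natural$.

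Finally, the isomorphism $\top T(A)\cong\soc T(A)$ is immediate from the first two identifications: both modules are isomorphic to $A$, where $\top T(A)=T(A)/\rad T(A)=T(A)/A^\natural\cong A$ via the canonical projection, while $A^\natural\cong A$ as a right $T(A)$-module through the same quotient map. I do not anticipate any real obstacle here; the only point requiring a moment's care is the reverse inclusion $\soc T(A)\subseteq A^\natural$, which I would justify by recalling that the socle is always annihilated by the Jacobson radical.
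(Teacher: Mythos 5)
Your proof is correct and is essentially a fleshed-out version of the paper's argument: the paper declares the identifications $\rad T(A)=A^\natural=\soc T(A)$ to be ``clear'' and obtains the final isomorphism by left multiplication with $(0,1)$, which is just the explicit map realizing your observation that both $\top T(A)=T(A)/A^\natural$ and $A^\natural=\soc T(A)$ are isomorphic to $A$ as $T(A)$-modules.
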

\begin{proof}
  The first assertion is clear. Left multiplication with $(0,1)$
  gives a map $T(A)\to T(A)$ which induces an isomorphism $ \top
T(A)\iso\soc T(A)$
\end{proof}

For a ring $A$ we denote by $\Si(A)$ the set of isomorphism classes of
simple $A$-modules. Set
\[\bar S:=\coprod_{S\in\Si(A)} S\qquad\text{and}\qquad\bar
  A:=\prod_{S\in\Si(A)}T(\End_A(S)).\] We view $\bar S$ as
$\bar A$-$A$-bimodule, with left action via
$T(\End_A(S))\twoheadrightarrow \End_A(S)$ for each $S$ in $\Si(A)$,
and consider the triangular matrix ring
\[\tilde A:=\begin{bmatrix}A&0\\\bar S&\bar A\end{bmatrix}.\]
The idempotents $e=\smatrix{1&0\\0&0}$ and  $f=\smatrix{0&0\\0&1}$
provide an $\tilde A$-module decomposition $\tilde A=P\oplus Q$, where
\[P:=e\tilde A\cong A\qquad\text{and}\qquad
  Q:=f \tilde A\cong \bar S\oplus \bar A.\]
We call
$\tilde A$ a \emph{cover} of $A$ because the idempotent $e\in\tilde A$
yields an isomorphism
\[\End_{\tilde A}(P)\cong e \tilde A e\cong A.\] The following lemma expresses the
distinct property of the cover $\tilde A$, namely that every simple
$\tilde A$-module embeds into $\tilde A$.

\begin{lem}\label{le:top(A)}
 We have $S^*\neq 0$ for every simple $\tilde A$-module $S$.
\end{lem}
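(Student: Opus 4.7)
My plan is to classify the simple right $\tilde A$-modules via the standard description of modules over a triangular matrix ring and then exhibit each one as a submodule of $\tilde A$. Writing right $\tilde A$-modules as triples $(X,Y,\phi)$ with $X$ a right $A$-module, $Y$ a right $\bar A$-module, and $\phi\colon Y\otimes_{\bar A}\bar S\to X$ a right $A$-linear map, one sees that every simple $\tilde A$-module is either \emph{(i)} a simple $A$-module $S$ of the form $(S,0,0)$, or \emph{(ii)} a simple $\bar A$-module $T$ of the form $(0,T,0)$ (with $\phi=0$ forced because $X=0$).

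Case (i) is straightforward: a direct computation
\[\smatrix{0&0\\s&0}\smatrix{a&0\\s'&b}=\smatrix{0&0\\sa&0}\]
shows that the $(2,1)$-block $\bar S\subset\tilde A$ is a right $\tilde A$-submodule on which both $\bar A$ and $\bar S$ act trivially. Since $\bar S=\coprod_{S\in\Sigma(A)}S$ decomposes as right $A$-module into a sum of all simples, it decomposes identically as right $\tilde A$-module, and every simple of type (i) embeds into $\tilde A$.

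Case (ii) is where Lemma~\ref{le:T(A)} does its work. Using $\bar A=\prod_{S\in\Sigma(A)}T(D_S)$ with $D_S:=\End_A(S)$ (a division ring by Schur), the simple $\bar A$-modules are exactly the simples of the individual factors $T(D_S)$, namely $D_S$ itself, arising from $T(D_S)\twoheadrightarrow D_S$. By Lemma~\ref{le:T(A)} applied to the semisimple ring $D_S$, we have $D_S\cong\soc T(D_S)=\rad T(D_S)=D_S^\natural$, so the simple $\bar A$-module already embeds into $\bar A$ as the right ideal $D_S^\natural\subset T(D_S)$. I would then transport this into the $(2,2)$-corner of $\tilde A$ and verify that the image is a right $\tilde A$-submodule. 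Compatibility with the $A$- and $\bar A$-actions is clear; the essential point is that right multiplication by $\bar S$ also vanishes on this image, because the left action of $T(D_S)$ on the summand $S$ of $\bar S$ factors through $T(D_S)\twoheadrightarrow D_S$, which kills $D_S^\natural$.

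The main subtlety is thus case (ii): the full $(2,2)$-copy of $\bar A$ in $\tilde A$ is \emph{not} a right $\tilde A$-submodule, since right multiplication by $\bar S$ can move elements into the $(2,1)$-block. The equality $\rad T(D)=\soc T(D)$ provided by Lemma~\ref{le:T(A)} is precisely what singles out the $\tilde A$-stable piece of $\bar A$ that realizes every simple $\bar A$-module inside $\tilde A$.
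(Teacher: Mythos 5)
Your proof takes essentially the same route as the paper's: split the simple $\tilde A$-modules into those supported at the idempotent $e$ and those supported at $f$, then exhibit each as a submodule of $\tilde A$ using the computation of $\soc Q$. Your use of the triple description $(X,Y,\phi)$ is just a more explicit form of the paper's argument via $\Hom_{\tilde A}(P,-)$ and $\Hom_{\tilde A}(Q,-)$, and your computations in cases (i) and (ii) agree with the paper's identification of $\soc Q = \bar S \oplus \soc\bar A$.

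That said, there is one step you should scrutinize, and it is a step the paper itself glosses over. In case (ii) you assert that ``the simple $\bar A$-modules are exactly the simples of the individual factors $T(D_S)$, namely $D_S$.'' When $\Si(A)$ is infinite, $\bar A = \prod_{S}T(D_S)$ is an infinite product of rings, and such a product has simple modules that do \emph{not} factor through any single component (for instance, for $A=\bbZ$ one has $\bar A/\rad\bar A \cong \prod_p \bbF_p$, whose maximal ideals beyond the obvious ones correspond to non-principal ultrafilters on the set of primes). Such a simple $Y$ cannot embed into $\rad\bar A$, since $\soc(\rad\bar A)=\coprod_S D_S^\natural$ only picks up the $D_S$; so for $T=(0,Y,0)$ one would in fact get $T^*=0$. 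The paper's ``therefore with a summand of $\soc\bar A$'' contains exactly the same unjustified leap. Your argument is therefore a faithful rendering of the paper's proof, gap included; both are complete only under an assumption such as $\Si(A)$ finite (e.g. $A$ semiperfect), which covers the intended applications but is not part of the stated hypothesis.
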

\begin{proof}
  We claim that each simple $\tilde A$-module arises as the image of a
  morphism $\tilde A=P\oplus Q\to Q$, using that
  \[\Hom_{\tilde A}(P,Q)\cong e\tilde A f=\bar S\qquad\text{and}\qquad
    \Hom_{\tilde A}(Q,Q)\cong f\tilde A f=\bar A.\]
 With Lemma~\ref{le:T(A)} we compute $\soc Q$ and obtain a
 decomposition into simples: 
 \[\soc Q=\bar S\oplus \soc \bar A\cong
\coprod_{S\in\Si(A)}   \big(S\oplus \End_A(S)^\natural\big).\] A
 simple $\tilde A$-module $T$ comes either with a nonzero map $P\to T$
 or a nonzero map $Q\to T$. In the first case $T$ identifies with a
 simple $A$-module via the inclusion $A\to \tilde A$ given by
 $x\mapsto \smatrix{x&0\\0&1}$, and therefore with a summand of
 $\bar S\subseteq \soc Q$. In the second case $T$ identifies with a
 simple $\bar A$-module via the inclusion $\bar A\to \tilde A$ given
 by $x\mapsto \smatrix{1&0\\0&x}$, and therefore with a summand of
 $\soc\bar A\subseteq \soc Q$. In any case one obtains a
 monomomorphism $T\to Q\hookrightarrow \tilde A$.
\end{proof}

Let $\P(A)$ denote the class of $A$-modules $M$ that admit a finite
resolution
\[0\lto P_n\lto \cdots \lto P_1 \lto P_0\lto M\lto 0\]
with all $P_i$ finitely generated projective. We denote by 
\[\findim A:=\sup\{\pdim M\mid M\in\P(A)\}\]
the \emph{small finitistic dimension} of $A$; this is a slight
variation of the usual definition which seems natural as the
modules in $\P(A)$ are precisley the ones which become compact (or
perfect) when viewed as an object in the derived category of $A$.
  
The following lemma is \cite[Lemma~7.2.8]{Kr2022} and its proof
is sketched for the convenience of the reader; cf.\ the discussion in
\cite[\S5]{Ba1960}.

\begin{lem}\label{le:findim-zero}
  For a ring $A$ we have $\findim A=0$ if and only if $M^*\neq 0$
  for every finitely presented $A^\op$-module $M$.
\end{lem}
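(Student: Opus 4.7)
The plan is to use the Auslander transpose. For any finitely presented left $A$-module $M$ with projective presentation $P_1 \xrightarrow{d} P_0 \to M \to 0$ by finitely generated projectives, set $\Tr M := \Coker(d^*)$, a finitely presented right $A$-module. Dualising yields the four-term exact sequence
\[0 \to M^* \to P_0^* \xrightarrow{d^*} P_1^* \to \Tr M \to 0,\]
and the key technical ingredient is that the canonical map $P_i \to P_i^{**}$ is an isomorphism for each finitely generated projective $P_i$.

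For $(\Rightarrow)$, I assume $\findim A = 0$ and prove the contrapositive: if $M^* = 0$, then $M = 0$. Under this assumption the four-term sequence degenerates to $0 \to P_0^* \to P_1^* \to \Tr M \to 0$, a length-one resolution of $\Tr M$ by finitely generated projectives, so $\Tr M \in \P(A)$. The hypothesis forces $\Tr M$ to be projective, so the sequence splits. Applying $(-)^*$ and identifying $P_i^{**}$ with $P_i$ produces a split short exact sequence $0 \to (\Tr M)^* \to P_1 \xrightarrow{d} P_0 \to 0$, whence $d$ is surjective and $M = \Coker(d) = 0$.

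For $(\Leftarrow)$, I assume $M^* \neq 0$ for every nonzero finitely presented left $A$-module $M$ and suppose, for contradiction, that some nonprojective $N \in \P(A)$ exists. Replacing $N$ by a suitable syzygy, I may assume $\pdim N = 1$, so there is an exact sequence $0 \to P_1 \xrightarrow{d} P_0 \to N \to 0$ with $d$ injective and nonsplit. Then $\Tr N$ must be nonzero: otherwise $d^*$ would be a split surjection, and dualising together with biduality would split $d$, making $N$ projective. Applying $(-)^*$ to $P_0^* \xrightarrow{d^*} P_1^* \to \Tr N \to 0$ yields $0 \to (\Tr N)^* \to P_1 \xrightarrow{d} P_0$; since $d$ is injective, $(\Tr N)^* = 0$, contradicting the hypothesis applied to the nonzero module $\Tr N$.

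The only step not immediately forced by the transpose exact sequence and biduality is the nonvanishing of $\Tr N$ in the second direction; once that is in hand, both implications reduce to a diagram chase in the four-term sequence above.
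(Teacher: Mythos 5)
Your proof is correct and is essentially the same argument as the paper's, just made explicit: the paper observes that $\findim A=0$ means every monomorphism in $\proj A$ splits and then lets the duality $(-)^*\colon(\proj A)^\op\iso\proj(A^\op)$ do the translation in one line, while you spell out that translation through the four-term Auslander--Bridger sequence $0\to M^*\to P_0^*\to P_1^*\to\Tr M\to 0$ and the biduality $P\iso P^{**}$ on finitely generated projectives. Both rest on the same two facts (reflexivity of finitely generated projectives and the identification $M^*=\Ker(d^*)$), and you correctly read the lemma as concerning \emph{nonzero} finitely presented $A^\op$-modules.
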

\begin{proof}
  Let $\proj A$ denote the category of finitely generated projective
  $A$-modules.  The condition $\findim A=0$ means that every
  monomorphism in $\proj A$ splits. The duality
  \[(-)^*\colon (\proj A)^\op\longiso\proj(A^\op)\]
  translates this into the condition on finitely presented 
  $A^\op$-modules.
\end{proof}

\begin{thm}\label{th:tildeA}
  For a  ring $A$ we have
  \[\findim \tilde A\geq\findim A\qquad\text{and}\qquad \findim
    \tilde A^\op=0.\]
  \end{thm}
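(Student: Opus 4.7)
The plan is to prove the two conclusions separately, by quite different methods. For $\findim \tilde A \geq \findim A$, I would introduce the functor $F := -\otimes_A P \colon \mod A \lto \mod \tilde A$, where $P = e\tilde A$ is regarded as an $A$-$\tilde A$-bimodule via the identification $A \cong e\tilde A e$ acting on the left. Since $P \cong A$ as a left $A$-module, $F$ is exact and sends $A$ to the projective module $P$, so it carries finitely generated projective $A$-modules to finitely generated projective $\tilde A$-modules. Lifting a finite projective resolution thus places $F(\P(A))$ in $\P(\tilde A)$ and gives $\pdim_{\tilde A} F(M) \leq \pdim_A M$ for $M \in \P(A)$. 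For the matching lower bound I would combine tensor--hom adjunction with the identity $Pe \cong A$ to compute $\Hom_{\tilde A}(F(M), F(N)) \cong \Hom_A(M,N)$, whence $\Ext^i_{\tilde A}(F(M), F(N)) \cong \Ext^i_A(M,N)$ for all $i$ (using exactness of $F$ together with the fact that $F$ sends projectives to projectives). Picking $N$ with $\Ext^{\pdim_A M}_A(M,N) \neq 0$ then forces $\pdim_{\tilde A} F(M) = \pdim_A M$, and the first inequality follows on taking the supremum.

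For $\findim \tilde A^\op = 0$ I would apply Lemma~\ref{le:findim-zero} with $\tilde A^\op$ in place of $A$; this reduces the claim to showing $M^* \neq 0$ for every nonzero finitely presented right $\tilde A$-module $M$. Such an $M$, being finitely generated and nonzero, admits a simple quotient $M \twoheadrightarrow S$. Lemma~\ref{le:top(A)} then supplies a nonzero---hence injective---map $S \to \tilde A$, and composing with the quotient produces the required nonzero element of $M^* = \Hom_{\tilde A}(M, \tilde A)$.

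The main technical hurdle is the Ext-comparison in the first part, which is what guarantees that the embedding $F$ does not collapse projective dimension. Everything else is either routine resolution-bookkeeping for triangular matrix rings or an immediate consequence of the two preparatory lemmas.
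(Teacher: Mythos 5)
Your proof is correct and follows the same route as the paper's: the first inequality via the exact, fully faithful, projective-preserving functor $-\otimes_A e\tilde A$ (you just unwind the fully faithfulness and Ext-comparison that the paper leaves implicit), and the second via Lemma~\ref{le:findim-zero} combined with Lemma~\ref{le:top(A)}, where you also helpfully make explicit the reduction from finitely presented modules to simple quotients.
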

  \begin{proof}
    The idempotent $e\in\tilde A$ with $e\tilde A e\cong A$ gives rise
    to a fully faithful functor
    \[-\otimes_A e\tilde A\colon\Mod A\lto\Mod\tilde A\]
which is exact and maps projectives to projectives; also it
preserves finite generation. This yields the first
assertion. The second assertion follows from
Lemma~\ref{le:findim-zero} because we have
$M^*\neq 0$ for every finitely presented $\tilde A$-module $M$ by  Lemma~\ref{le:top(A)}. 
\end{proof}
\begin{center}
   \textasteriskcentered \qquad \textasteriskcentered \qquad \textasteriskcentered
\end{center}\smallskip

There is a somewhat more natural construction of a cover when the ring
$A$ is semilocal.  Recall that $A$ is \emph{semilocal} if the ring
$A/\rad A$ is semisimple. In this case we have an idempotent
$\e\in A/\rad A$ and a Morita equivalence
\[A/\rad A  \;\sim\; \e(A/\rad A)\e
  \;\cong\;\prod_{S\in\Si(A)}\End_A(S).\]
We set
\[\widetilde A:=\begin{bmatrix}A&0\\ A/\rad A&T(A/\rad A)\end{bmatrix}\]
and this is closely related to the cover $\tilde A$ via the idempotent $\tilde\e=\smatrix{1&0\\0&(\e,0)}$.

\begin{lem}
For a semilocal ring $A$ we have a Morita equivalence
 \[\widetilde A\;\sim\; \tilde\e\widetilde A\tilde\e\;\cong\;\tilde A.\]
\end{lem}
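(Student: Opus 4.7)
The plan is to realize $\tilde A$ as the corner of $\widetilde A$ cut out by the full idempotent $\tilde\e$. First I would verify that $\tilde\e$ is an idempotent in $\widetilde A$: in the $(1,1)$ entry we have $1\in A$, and in the $(2,2)$ entry the element $(\e,0)\in T(A/\rad A)$ squares to $(\e^2,\e\cdot 0+0\cdot\e)=(\e,0)$.

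Next, to obtain the Morita equivalence $\widetilde A\sim\tilde\e\widetilde A\tilde\e$ it suffices to check that $\tilde\e$ is a full idempotent, i.e.\ $\widetilde A\tilde\e\widetilde A=\widetilde A$. Working entrywise, this reduces to showing that $(\e,0)$ generates $T(A/\rad A)$ as a two-sided ideal. The Morita equivalence $A/\rad A\sim\e(A/\rad A)\e$ already stated in the excerpt is exactly the fullness of $\e$ in $A/\rad A$, so the ideal generated by $(\e,0)$ surjects onto the quotient $T(A/\rad A)/(A/\rad A)^\natural\cong A/\rad A$. The nilpotent component is captured by the identity $(x,0)(\e,0)(0,y)=(0,x\e y)$: as $x,y$ vary over $A/\rad A$ one obtains the whole of $(A/\rad A)\e(A/\rad A)=A/\rad A$ inside the ideal factor, hence all of $(A/\rad A)^\natural$.

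Finally I would compute $\tilde\e\widetilde A\tilde\e$ entry by entry. The $(1,1)$ entry is $A$, and the $(1,2)$ entry vanishes by triangularity. Using $(\e,0)(x,y)(\e,0)=(\e x\e,\e y\e)$, the $(2,2)$ entry identifies with $T(\e(A/\rad A)\e)$; since $T$ commutes with finite products of rings, this becomes $T\bigl(\prod_{S}\End_A(S)\bigr)\cong\prod_{S}T(\End_A(S))=\bar A$. For the $(2,1)$ entry, the left $T(A/\rad A)$-action on the bimodule $A/\rad A$ factors through $T(A/\rad A)\twoheadrightarrow A/\rad A$, so the entry reduces to $\e(A/\rad A)$; invoking the semisimple decomposition $A/\rad A\cong\prod_{S}M_{n_S}(\End_A(S))$ and the fact that $\e$ amounts to one primitive idempotent per simple factor, I identify $\e(A/\rad A)\cong\coprod_{S}S=\bar S$ as a right $A$-module.

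The main obstacle is to check that the induced left $\bar A$-action on $\bar S$ matches the one specified in the definition of $\tilde A$; this works out because in both cases it factors through $\bar A\twoheadrightarrow\prod_{S}\End_A(S)$ and then acts factor by factor on the simples. Everything else is a routine unwinding of the definition of a corner of a triangular matrix ring.
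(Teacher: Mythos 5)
Your proof is correct, but it takes a more hands-on route than the paper. The paper invokes two abstract facts: (i) if $B\sim eBe$ then the triangular matrix rings $\smatrix{A&0\\M&B}$ and $\smatrix{A&0\\eM&eBe}$ are Morita equivalent, and (ii) if $B\sim eBe$ then $T(B)\sim T(eBe)=(e,0)T(B)(e,0)$; chaining these immediately gives the claim without any explicit computation. You instead verify directly that $\tilde\e$ is a full idempotent in $\widetilde A$ (showing $T(A/\rad A)(\e,0)T(A/\rad A)=T(A/\rad A)$ by checking the quotient and the nilpotent part $(A/\rad A)^\natural$ separately, then feeding this entrywise into the lower-triangular structure) and then compute the corner $\tilde\e\widetilde A\tilde\e$ entry by entry, using $T(\e(A/\rad A)\e)\cong\bar A$ and $\e(A/\rad A)\cong\bar S$. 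What the paper's approach buys is brevity and reusability of the two general Morita facts; what your approach buys is self-containment, since you never need to cite the trivial-extension Morita lemma and you see explicitly why $\tilde\e$ is full. Both routes do require, as you note, pinning down that the left $T(A/\rad A)$-action on the bimodule $A/\rad A$ factors through the projection to $A/\rad A$, which is implicit in how $\widetilde A$ is set up in parallel with $\tilde A$.
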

\begin{proof}
  We use some general facts.  Let $\La=\smatrix{A&0\\M&B}$ be a
  triangular matrix ring and $e\in B$ an idempotent such that $B$ and
  $e Be$ are Morita equivalent. Set $\bar e=\smatrix{1&0\\0&e}$. Then
  $\La$ and $\bar e\La\bar e=\smatrix{A&0\\ e M&eBe}$ are Morita
  equivalent. Also, the trivial extensions $T(B)$ and
  $T(eBe)=(e,0) T(B)(e,0)$ are Morita equivalent.
  \end{proof}

  We may identify $\widetilde A$ with $\tilde A$, as we are mostly
  interested in homological properties. In fact,
  $\widetilde A\cong \tilde A$ when $A$ is semiperfect and basic.
  Note that the definition of $\widetilde A$ does not depend on any
  choices. In particular, we have an
  identity $\widetilde{A^\op}=\widetilde{A}^\op$.

 Next we discuss some ring theoretic properties which are preserved
 under the passage from $A$ to its cover $\tilde A$.  Recall that a
 ring is \emph{semiprimary} if it is semilocal and its radical is a
 nilpotent ideal.

\begin{rem}\label{re:semilocal}
  If $A$ is semilocal then $\tilde A$ is semilocal.  This follows from
  the isomorphism
  \[\top\tilde A=\top P\oplus \top Q\iso A/\rad A\oplus (A/\rad A)^\natural=\soc Q\]
which is induced by the morphism $\tilde A=P\oplus Q\to
Q\hookrightarrow\tilde A$ given by left multiplication with
  $\smatrix{0&0\\1&(0,1)}$. Moreover, in this case the inclusion
$A\hookrightarrow\tilde A$ yields the identity $(\rad A)^n= (\rad\tilde A)^n$ for
all $n>1$. 
\end{rem}

\begin{rem}\label{re:perfect}
  If the ring $A$ is left or right perfect then the same holds for
  $\tilde A$. This follows from Remark~\ref{re:semilocal}, since $A$ is
  right perfect if and only if $A$ is semilocal and $\rad A$ is right
  T-nilpotent.
\end{rem}

There is an analogue of Theorem~\ref{th:tildeA} for the \emph{big finitistic dimension}
\[\Findim A:=\sup\{\pdim M\mid M\in\Mod A,\, \pdim M<\infty\}.\] We
use the following fact which is a slight variation of
\cite[Theorem~6.3]{Ba1960}.

\begin{prop}\label{pr:findim-zero}
  For a ring $A$ we have $\Findim A=0$ if and only if $A$ is right
  perfect and $\findim A=0$.
\end{prop}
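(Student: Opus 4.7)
The plan is to prove both implications, following \cite[Theorem~6.3]{Ba1960}.

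For $(\Rightarrow)$, assume $\Findim A=0$. Then $\findim A=0$ is immediate, since every module in $\P(A)$ has finite projective dimension and so must be projective by hypothesis. To see that $A$ is right perfect, I would invoke Bass's equivalent characterization: $A$ is right perfect iff $A/\rad A$ is semisimple and $\rad A$ is right T-nilpotent. For the T-nilpotency, argue by contradiction: given a sequence $a_1,a_2,\ldots\in\rad A$ with $a_n a_{n-1}\cdots a_1\neq 0$ for all $n$, form the free module $F=A^{(\bbN)}$ with basis $e_1,e_2,\ldots$ and the endomorphism $\phi$ sending $e_i\mapsto e_i-e_{i+1}a_i$. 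A direct computation shows $\phi$ is injective, while any candidate splitting $\psi$ of $\phi$ would force some product $a_{j+k-1}\cdots a_j$ to vanish, contradicting the hypothesis. Hence the cokernel of $\phi$ has projective dimension exactly one, contradicting $\Findim A=0$. The semisimplicity of $A/\rad A$ requires a parallel Bass-style construction.

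For $(\Leftarrow)$, assume $A$ is right perfect and $\findim A=0$. The plan is to show that over a right perfect ring every module of finite projective dimension actually lies in $\P(A)$, so that $\Findim A=\findim A=0$. The key structural input is Bass's theorem that every projective module over a right perfect ring decomposes as a direct sum of indecomposable, cyclic projectives with local endomorphism rings, together with the existence of minimal projective covers. Given $M$ with $\pdim M=n<\infty$, form the minimal projective resolution $P_\bullet\to M$; each $P_i$ so decomposes, and the differentials have image in $P_{i-1}\cdot\rad A$ by minimality. Pick an indecomposable (hence finitely generated) summand $Q\subseteq P_n$ and propagate its image backwards through the resolution, taking at each step a finite direct sum of indecomposable summands of $P_{i-1}$ large enough to contain the previous differential's image; with some care one produces a finitely generated projective subcomplex that resolves a finitely generated submodule of $M$ whose projective dimension is still $n$. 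This submodule lies in $\P(A)$, so $\findim A=0$ forces $n=0$.

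The main obstacle is the backwards propagation in $(\Leftarrow)$: to guarantee that the finitely generated subcomplex inherits exactness and retains length $n$, one must exploit both the decomposition of projectives over right perfect rings and the minimality of projective covers in an essential way. The semisimplicity part of $(\Rightarrow)$ is also more subtle than the T-nilpotency part, and likely needs a separate though analogous construction.
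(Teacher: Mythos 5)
The forward direction follows Bass, which is exactly what the paper does (it simply cites \cite{Ba1960} and does not reprove it), so no issues there.

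The converse direction is where your proposal diverges and where there is a genuine gap. First, the stated goal ``every module of finite projective dimension lies in $\P(A)$'' cannot be correct as phrased: modules in $\P(A)$ are finitely generated, while $A^{(\bbN)}$ is projective (hence of finite projective dimension) and not finitely generated. The goal would have to be weakened to: for every $M$ with $\pdim M = n < \infty$ there exists some $M' \in \P(A)$ with $\pdim M' = n$. Second, and more seriously, the backwards propagation through the minimal resolution does not obviously produce an \emph{exact} subcomplex. Once you have chosen $Q = P'_n \subseteq P_n$ and then $P'_{n-1}\supseteq d_n(P'_n)$, you must later verify $\ker(d_{n-1}|_{P'_{n-1}}) = d_n(P'_n)$, i.e.\ $\ker(d_{n-1})\cap P'_{n-1} = d_n(P'_n)$; but $\ker(d_{n-1})\cap P'_{n-1}$ may be strictly larger, and fixing this would require enlarging $P'_n$, which is the step you have already committed to. There is no evident termination of such an iterative enlargement, and you acknowledge the gap but do not close it, so the argument does not go through.

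The paper's argument for the converse is cleaner and avoids constructing resolutions altogether. One reduces $\Findim A = 0$ to the statement that every monomorphism $\phi\colon M\to N$ between projective modules splits. Since $A$ is right perfect, $M$ decomposes into finitely generated projective summands and so is a filtered colimit $M = \colim_i M_i$ of such summands; each restriction $\phi_i\colon M_i\to N$ factors through a finitely generated projective direct summand of $N$ and therefore splits by $\findim A = 0$. Then $\Coker\phi \cong \colim_i\Coker\phi_i$ is a filtered colimit of projectives, hence flat, and over a right perfect ring flat implies projective. So $\phi$ splits. You should replace your $(\Leftarrow)$ argument with this (or some other complete one); the filtered-colimit-of-split-monos device, together with flat $=$ projective over perfect rings, is the key input your proposal is missing.
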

\begin{proof}
  Suppose that $A$ is right perfect and $\findim A=0$. We need to show
  that every monomorphism $\p\colon M\to N$ between projective
  $A$-modules splits. This holds when $M$ and $N$ are finitely
  generated since $\findim A=0$. Because $A$ is right perfect, any
  projective $A$-module decomposes into a direct sum of finitely
  generated modules and can therefore be written as a filtered colimit
  of finitely generated direct summands.  Choose such a presentation
  $M=\colim_i M_i$. Then $\p$ is a filtered colimit of split
  monomorphisms $\p_i\colon M_i\to N$, and therefore
  $\colim_i\Coker\p_i\cong \Coker\p$ is a filtered colimit of
  projectives. Thus $\Coker\p$ is projective and $\p$ splits. For the
  other implication we refer to \cite{Ba1960}.
\end{proof}

\begin{thm}\label{th:Atilde}
  For a ring $A$ we have \[\Findim \tilde A\geq\Findim A.\] Moreover,
  $\Findim \tilde A^\op=0$ if and only if $A$ is left perfect.
\end{thm}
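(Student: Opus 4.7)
The plan splits into two parts. For the inequality $\Findim \tilde A \geq \Findim A$ I would re-run the functorial argument of Theorem~\ref{th:tildeA} at the level of arbitrary modules; for the equivalence involving $\Findim \tilde A^{\op}$ I would combine Proposition~\ref{pr:findim-zero} with Theorem~\ref{th:tildeA} and Remark~\ref{re:perfect}. More specifically, recall the functor
\[ F := -\otimes_A e\tilde A \colon \Mod A \lto \Mod \tilde A \]
used in the proof of Theorem~\ref{th:tildeA}. It is exact, sends free modules $A^{(I)}$ to free $\tilde A$-modules $(e\tilde A)^{(I)}$ (so it preserves all projectives), and is fully faithful: its right adjoint $G := (-)e$ satisfies $GF(M) = M \otimes_A e\tilde A e \iso M$ via $e\tilde A e \cong A$, so the unit is invertible. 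A projective resolution $P_\bullet \to M$ is sent by $F$ to a projective resolution of $FM$, and the Hom-adjunction then yields $\Ext^i_{\tilde A}(FM, FN) \cong \Ext^i_A(M, GFN) = \Ext^i_A(M, N)$ for all $i \geq 0$. Consequently $\pdim_{\tilde A} FM = \pdim_A M$, giving the first inequality.

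For the equivalence, Proposition~\ref{pr:findim-zero} states that $\Findim \tilde A^{\op} = 0$ iff $\tilde A^{\op}$ is right perfect---equivalently, $\tilde A$ is left perfect---and $\findim \tilde A^{\op} = 0$. The latter condition is automatic from Theorem~\ref{th:tildeA}, so the claim reduces to the statement ``$\tilde A$ is left perfect iff $A$ is left perfect''. The forward implication is Remark~\ref{re:perfect}. For the converse, the map $\smatrix{a & 0 \\ s & b} \mapsto a$ is a surjective ring homomorphism $\tilde A \twoheadrightarrow A$ with kernel the two-sided ideal $\smatrix{0 & 0 \\ \bar S & \bar A}$, exhibiting $A$ as a ring-theoretic quotient of $\tilde A$. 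Since both semilocality and left T-nilpotence of the Jacobson radical descend under surjective ring homomorphisms, left perfectness passes from $\tilde A$ down to $A$.

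The main obstacle I anticipate is really only the verification that $F$ controls projective dimension of \emph{arbitrary} (not merely finitely generated) modules, as is required for the big finitistic dimension rather than the small one. This is secured by the $\Ext$-identification spelled out above, which depends only on exactness of $F$, preservation of projectives, and invertibility of the unit $\id \iso GF$. Beyond this, the argument is essentially assembly of Proposition~\ref{pr:findim-zero}, Theorem~\ref{th:tildeA} and Remark~\ref{re:perfect}.
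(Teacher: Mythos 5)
Your proof is correct and follows essentially the paper's route, filling in details the paper leaves implicit: the $\Ext$-comparison $\Ext^i_{\tilde A}(FM,FN)\cong\Ext^i_A(M,N)$ (showing $F$ preserves projective dimension exactly, not merely bounding it above), and the quotient-ring argument via the surjection $\tilde A\twoheadrightarrow A$ for why $\tilde A$ left perfect forces $A$ left perfect --- a point the paper asserts without comment. One small labelling slip: after reducing to ``$\tilde A$ is left perfect iff $A$ is left perfect'' you call the implication $A\Rightarrow\tilde A$ (Remark~\ref{re:perfect}) the forward direction and the quotient argument (which gives $\tilde A\Rightarrow A$) the converse, which is the reverse of the iff as you wrote it, though the attributions themselves are correct; and $F(A^{(I)})=(e\tilde A)^{(I)}$ is projective rather than free over $\tilde A$, but the conclusion that $F$ preserves projectives is of course right.
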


\begin{proof}
  The first assertion is easily checked as in the proof of
  Theorem~\ref{th:tildeA}. If $\Findim \tilde A^\op=0$, then
  $\tilde A$ is left perfect by Proposition~\ref{pr:findim-zero}, and
  this implies that $A$ is left perfect. For the converse suppose that
  $A$ is left perfect. Then $\tilde A$ is left perfect by
  Remark~\ref{re:perfect}.  Thus $\Findim \tilde A^\op=0$ by
  Theorem~\ref{th:tildeA} and Proposition~\ref{pr:findim-zero}.
\end{proof}

\begin{center}
 \textasteriskcentered \qquad \textasteriskcentered \qquad \textasteriskcentered
\end{center}\smallskip

The preceding results demonstrate a failure of symmetry for the notion of `finite
finitistic dimension', as pointed out in the recent work of  Cummings
\cite{Cu2022}. In particular we have the following examples.

Recall that a noetherian ring is \emph{regular} if all its
finitely generated modules have finite projective dimension.

\begin{cor}
  Let $A$ be a commutative noetherian ring that is regular of infinite Krull dimension.
  Then \[\findim \tilde A=\infty\qquad\text{and}\qquad \findim
    \tilde A^\op=0.\]
\end{cor}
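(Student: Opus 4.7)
My plan is to reduce both claims to statements about $A$ itself via Theorem~\ref{th:tildeA}, and then exploit the regularity together with the unbounded Krull dimension to produce modules of arbitrarily large projective dimension over $A$.

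The second equality $\findim \tilde A^\op = 0$ is immediate: it is the right-hand conclusion of Theorem~\ref{th:tildeA}, applied without any hypothesis on $A$. The first equality requires slightly more, but by the inequality $\findim \tilde A \geq \findim A$ from the same theorem it suffices to prove that $\findim A = \infty$.

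To produce this, I would fix an integer $n \geq 0$ and choose, using infinite Krull dimension, a prime ideal $\frp \subseteq A$ of height $n$. The module $A/\frp$ is finitely generated over the commutative noetherian ring $A$, and since $A$ is regular it has finite projective dimension. Combined with noetherianity, this means $A/\frp$ admits a finite resolution by finitely generated projective $A$-modules, so $A/\frp \in \P(A)$. On the other hand, localisation is exact and sends finitely generated projectives to finitely generated projectives, so
\[
\pdim_A(A/\frp) \;\geq\; \pdim_{A_\frp}\bigl((A/\frp)_\frp\bigr) \;=\; \pdim_{A_\frp}\bigl(A_\frp/\frp A_\frp\bigr).
\]
The local ring $A_\frp$ is regular of Krull dimension $n$, and by the Auslander--Buchsbaum--Serre theorem its residue field has projective dimension exactly $n$. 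Hence $\pdim_A(A/\frp) \geq n$, so $\findim A \geq n$; letting $n$ vary gives $\findim A = \infty$.

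The only potential subtlety is making sure that $A/\frp$ really lies in $\P(A)$ under the definition given in the excerpt (a \emph{finite} resolution by finitely generated projectives), rather than just having finite projective dimension in the abstract; but this is precisely where regularity plus noetherianity enters, as any finitely generated module over a commutative noetherian ring admits a resolution by finitely generated projectives that can be truncated once its projective dimension is finite. No further obstacle should arise.
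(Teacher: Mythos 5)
Your proof is correct. Both you and the paper reduce via Theorem~\ref{th:tildeA} to the single claim $\findim A=\infty$; the only difference is how that claim is established. The paper simply cites Theorem~1.6 and Corollary~1.7 of Auslander--Buchsbaum \cite{AB1958}, whereas you unpack the underlying argument: pick a prime $\frp$ of height $n$, note that $A/\frp\in\P(A)$ because $A$ is noetherian and regular (in the sense that finitely generated modules have finite projective dimension, so the finite free resolution can be truncated), localise to the regular local ring $A_\frp$ of dimension $n$, and invoke Auslander--Buchsbaum--Serre to get $\pdim_{A_\frp}k(\frp)=n$, hence $\pdim_A(A/\frp)\geq n$. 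This is a legitimate, self-contained substitute for the citation, essentially reproving the relevant piece of \cite{AB1958} in the commutative case. What you gain is a proof that a reader can check without the reference; what the paper gains is brevity. Two small points worth being explicit about if you keep your version: the localisation inequality $\pdim_A M\geq\pdim_{A_\frp}M_\frp$ holds because localisation is exact and preserves (finitely generated) projectives, and the fact that $A_\frp$ is regular local follows from the paper's definition of regularity because $A/\frp$ having finite projective dimension over $A$ forces $k(\frp)$ to have finite projective dimension over $A_\frp$, which by Auslander--Buchsbaum--Serre makes $A_\frp$ regular — you implicitly use both and they are correct, but neither is completely automatic from the phrasing.
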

\begin{proof}
  The finitistic dimension $\findim A$ is infinite by Theorem~1.6 and
  Corollary~1.7 in \cite{AB1958}.  Thus the assertion follows from
  Theorem~\ref{th:tildeA}.
\end{proof}

Specific examples of  regular rings of infinite Krull dimension have
been constructed by Nagata; cf.\ \cite[Example~7.2.20]{Kr2022}.

We continue with an example due to Kirkman and Kuzmanovich
\cite{KK1990}. Let $k$ be a field and
consider the quotient  $\La=kQ/I$ of the path algebra $kQ$ given by the quiver
\[ \begin{tikzcd} Q:&\circ \arrow[yshift=0.5ex,"a_i"]{r} & \circ
    \arrow[yshift=-0.5ex,"b_i"]{l}\quad (i\in\bbN)\end{tikzcd} \]
(with $k$-basis given by the paths in $Q$ and multiplication induced
by the composition of paths, where for any pair of paths $\a,\b$ we
write $\b\a$ for the composite when the terminal vertex of $\a$ equals
the initial vertex of $\b$) modulo the ideal $I$ that is generated by
the elements
\[  b_t a_s b_r \quad (r,s,t\in\bbN)\qquad b_t a_s-a_t b_t\quad (t>s)\qquad
  a_r b_r\quad (r\in\bbN).\]
Note that $\La$ is a semiprimary ring with $(\rad\La)^4=0$.

\begin{cor}
  The ring $\tilde\La$ is semiprimary satisfying
  \[\Findim \tilde \La=\findim \tilde \La=\infty\qquad\text{and}\qquad \Findim \tilde \La^\op=\findim \tilde
    \La^\op=0.\]
\end{cor}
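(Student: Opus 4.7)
The proof is essentially an application of the machinery already developed in the note, with Kirkman--Kuzmanovich's theorem providing the input. My plan proceeds in three steps.

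First I would verify that $\tilde\La$ is semiprimary. Since $\La$ is semilocal, Remark~\ref{re:semilocal} shows that $\tilde\La$ is semilocal and that $(\rad\tilde\La)^n=(\rad\La)^n$ for all $n>1$. Given that $(\rad\La)^4=0$, this immediately yields $(\rad\tilde\La)^4=0$, so $\rad\tilde\La$ is nilpotent and $\tilde\La$ is semiprimary.

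Second, I would recall that Kirkman--Kuzmanovich \cite{KK1990} construct $\La$ precisely so that $\findim\La=\infty$. Hence by Theorem~\ref{th:tildeA} we get $\findim\tilde\La\geq \findim\La=\infty$, and the trivial inequality $\Findim\tilde\La\geq\findim\tilde\La$ then forces $\Findim\tilde\La=\infty$ as well.

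Third, for the opposite side, I would use that every semiprimary ring is both left and right perfect (its radical is nilpotent, hence T-nilpotent on either side). In particular $\La$ is left perfect, so Theorem~\ref{th:Atilde} yields $\Findim\tilde\La^\op=0$, and a fortiori $\findim\tilde\La^\op=0$. Alternatively one could note that $\tilde\La$ itself is semiprimary by step one and apply Theorem~\ref{th:tildeA} together with Proposition~\ref{pr:findim-zero}.

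There is no real obstacle: each ingredient is already in place. The only point that deserves a moment's care is the passage from $\La$ semiprimary to $\tilde\La$ semiprimary, which is exactly what Remark~\ref{re:semilocal} is designed to handle, and the observation that semiprimary implies perfect on both sides, which is needed to invoke Theorem~\ref{th:Atilde} for the big finitistic dimension on the opposite side.
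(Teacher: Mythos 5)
Your proposal is correct and follows essentially the same route as the paper: Remark~\ref{re:semilocal} for semiprimariness, the Kirkman--Kuzmanovich result for $\findim\La=\infty$, and then the theorems of the note together with the fact that a semiprimary ring is (left and right) perfect. The paper's own proof is slightly terser, citing only Theorem~\ref{th:Atilde}; you simply make explicit the appeal to Theorem~\ref{th:tildeA} for the small finitistic dimension and the standard inequality $\findim\leq\Findim$, which the paper leaves implicit.
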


\begin{proof}
  From Remark~\ref{re:semilocal} it follows that the ring $\tilde \La$
  is semilocal with $(\rad\tilde \La)^4=0$.  Thus $\tilde\La$ is
  semiprimary.  In \cite{KK1990} it is shown that
  $\findim \La=\infty$. Then the assertion follows from
  Theorem~\ref{th:Atilde}, using that $\La$ is left perfect.
\end{proof}

\end{document}